\documentclass[12pt]{article}
\usepackage{lineno}
\usepackage{orcidlink}
\usepackage{amsmath}     
\usepackage{amssymb}     
\usepackage{amsthm}
\usepackage{amsfonts}
\usepackage{hyperref}
\usepackage{comment}
\usepackage[font=small]{caption}
\usepackage{tikz}
\usepackage[percent]{overpic}
\usepackage[showonlyrefs]{mathtools}
\mathtoolsset{showonlyrefs}
\numberwithin{equation}{section}
\def\re{\operatorname{Re}}
\def\im{\operatorname{Im}}

\def\meas{\operatorname{meas}}
\def\sing{\operatorname*{sing}}

\def\length{\operatorname*{length}}

\def\Z{\mathbb{Z}}
\def\R{\mathbb{R}}
\def\C{\mathbb{C}}
\def\H{\mathbb{H}}

\def\B{\mathcal{B}}
\newcommand*{\defeq}{\mathrel{\vcenter{\baselineskip0.5ex \lineskiplimit0pt
                        \hbox{\scriptsize.}\hbox{\scriptsize.}}}%
        =}

\newtheorem{theorem}{Theorem}[section]
\newtheorem{lemma}[theorem]{Lemma}
\newtheorem{proposition}[theorem]{Proposition}
\newtheorem{corollary}[theorem]{Corollary}
\theoremstyle{definition}

\theoremstyle{remark}
\newtheorem{remark}[theorem]{Remark}

\begin{document}  
\title{On the Hausdorff dimension of the set of non-escaping points in the Julia set}
\author{Walter Bergweiler\orcidlink{0000-0001-5345-1831}, 
Weiwei Cui\orcidlink{0000-0003-2505-4710}
and Lingrui Wang\orcidlink{0009-0007-4360-7338}\thanks{The first two authors acknowledge partial support by a grant from Shandong Province.
Cui was partially supported by NSFC (No.\ 12401105), Qingdao NSF (No.\ 24-4-4-zrjj-8-jch)
and Shandong Provincial Natural Science Fund for Excellent Young Scientists Program 
(Overseas) (No.\ 2025HWYQ-021). 
Wang was supported by Shandong Postdoctoral Science Foundation (Grant No.\ SDZZ-ZR-202501290).}}
\date{}
\maketitle
\begin{center}
\emph{Dedicated to the memory of Robert L.\ Devaney}
\end{center}
\begin{abstract}
For an entire function $f$ and a non-zero complex number $\lambda$, let $f_\lambda(z)=f(\lambda z)$.
We give conditions on $f$ which imply that the Hausdorff dimension of the set of
non-escaping points in the Julia set of $f_\lambda$ tends to $1$ as $|\lambda|\to 0$.
In fact, we give an upper bound for this dimension in terms of $\lambda$.
This generalizes earlier results concerned with the case that $f(z)=\exp z$.

\smallskip
Keywords: Julia set, Fatou set, escaping set, Hausdorff dimension, disjoint type, Eremenko--Lyubich class
\end{abstract}
\section{Introduction and results}\label{sec:intro}
The main objects studied in transcendental dynamics are,
for a transcendental entire function~$f$,
the \emph{Fatou set} $F(f)$, which is the set of
all points in $\C$ where the iterates $f^n$ form a normal family,
and the \emph{Julia set} $J(f)\defeq \C\setminus F(f)$.
A major role in the theory is also played by the set $I(f)$ consisting of those points $z\in\C$ 
for which $|f^n(z)|\to\infty$ as $n\to\infty$.
This set was introduced by Eremenko~\cite{Eremenko1989} and named
\emph{escaping set} by Devaney~\cite{Devaney1989}.
We refer to~\cite{Bergweiler1993,Schleicher2010} for 
an introduction to transcendental dynamics, 
and to~\cite{Bergweiler2025a} for a
thorough discussion of the escaping set. 


The study of the dynamics of the exponential family $E_\lambda(z)\defeq \lambda e^z$ 
was pioneered by Devaney in the 1980s; see
\cite{Devaney1984a,Devaney1984,Devaney1985,Devaney1987,Devaney1984b}.
From the large further literature on exponential dynamics
we only mention~\cite{Devaney2010,Rempe2006,Schleicher2003} here.

McMullen \cite[Theorems 1.2 and~1.3]{McMullen1987} proved that $\dim J(E_\lambda)=2$ for all 
$\lambda\in\C\setminus\{0\}$, while
$\meas J(E_\lambda)=0$ if $E_\lambda$ has an attracting periodic point.
Here $\dim(\cdot)$ and $\meas(\cdot)$ denote the Hausdorff dimension and
the Lebesgue measure. McMullen actually showed that 
$\dim I(E_\lambda)=2$ and that $I(E_\lambda)\subset J(E_\lambda)$ for all 
$\lambda\in\C\setminus\{0\}$.

Urba\'nski and Zdunik~\cite[Theorem 6.1]{Urbanski2003}
proved that if $E_\lambda$ has an attracting periodic point, then
$\dim (J(E_\lambda)\setminus I(E_\lambda))<2$.
Their proof uses the thermodynamical formalism.
They also gave a direct proof~\cite[Theorem 7.2]{Urbanski2003} that
\begin{equation} \label{a3}
\lim_{\lambda\to 0}\dim\!\left( J(E_\lambda) \setminus I(E_\lambda) \right) =1
\end{equation}
and deduced~\cite[Corollary 7.3]{Urbanski2003}
from this that $\dim (J(E_\lambda)\setminus I(E_\lambda))<2$ for $|\lambda|<1/e$.

A quantitative version of~\eqref{a3} was obtained in~\cite{Bergweiler2021} where,
sharpening earlier estimates in~\cite{Karpinska1999a}, it was shown that
\begin{equation} \label{y5a}
\dim\!\left( J(E_\lambda) \setminus I(E_\lambda) \right) 
\leq 1+\frac{\log\log\log \frac{1}{|\lambda|}}{\log\log \frac{1}{|\lambda|}}
\end{equation}
if $|\lambda|$ is sufficiently small.
As shown in~\cite[Theorem~1]{Wang2004} we actually have
\begin{equation} \label{y6}
\dim\!\left( J(E_\lambda) \setminus I(E_\lambda) \right) - 1
\sim \frac{\log\log\log \frac{1}{|\lambda|}}{\log\log \frac{1}{|\lambda|}}
\end{equation}
as $|\lambda|\to 0$.
In fact, the result in~\cite{Wang2004} is more precise than~\eqref{y6} by giving a
four terms asymptotic formula for $\dim\!\left( J(E_\lambda) \setminus I(E_\lambda) \right)$
as $|\lambda|\to 0$.
Moreover, it was shown in~\cite[Theorem~2]{Wang2004} that~\eqref{y6} also holds
if the exponential function is replaced by $f(z)=a e^z+be^{-z}$, with $a,b\in\C\setminus\{0\}$.

The main purpose of this paper is to generalize~\eqref{a3} and in fact~\eqref{y5a}
to a larger class of functions that was introduced in~\cite{Bergweiler2025}.
In order to formulate our result we 
recall that the set $\sing(f^{-1})$ of singularities of the inverse 
to a transcendental entire function $f$ consists of the critical
and (finite) asymptotic values of~$f$.
An important class of functions in transcendental dynamics is
the \emph{Eremenko--Lyubich class} $\B$ consisting of all entire functions $f$
for which $\sing(f^{-1})$ is bounded;
see~\cite{Sixsmith2018} for a survey of the dynamics of functions in this class.

An important tool to study this class is the \emph{logarithmic change of variable}.
It was introduced by Eremenko and Lyubich \cite[Section~2]{Eremenko1992} to transcendental dynamics.
We describe it briefly, using 
here and in the following, for $a\in\C$ and $r>0$, the notation
$D(a,r)\defeq \{z\colon |z-a|<r\}$,
$\Delta_r\defeq \{z\colon |z|>r\}$ and
$\H_{>r}\defeq\{z\colon \re z>r\}$.
The half-planes $\H_{<r}$, $\H_{\geq r}$ and $\H_{\leq r}$ are defined analogously.

Let $f\in\B$. Choose $R>|f(0)|$ with $\overline{\sing(f^{-1})}\subset D(0,R)$.
Put $V_R\defeq f^{-1}(\Delta_R)$ and $W_R\defeq \exp^{-1}(V_R)$.
Eremenko and Lyubich~\cite{Eremenko1992}
showed that then there exists a $2\pi i$-periodic function 
\begin{equation} \label{a0}
F\colon W_R\to \H_{>\log R}
\end{equation}
which maps every connected component of $W_R$ biholomorphically to $\H_{>\log R}$
and satisfies $\exp F(z)=f(\exp z)$ for $z\in W_R$.
This function $F$ is called the \emph{logarithmic transform} of~$f$.

An important tool to study the class $\B$ is the estimate~\cite[Lemma~1]{Eremenko1992}
\begin{equation} \label{a1}
|F'(w)|\geq \frac{1}{4\pi} (\re F(w) - \log R)
\quad \text{for}\  w\in W_R.
\end{equation}
This was used by Eremenko and Lyubich~\cite[Theorem~1]{Eremenko1992}
to prove that $I(f)\subset J(f)$ for $f\in\B$.

A class of entire functions satisfying a stronger condition
than~\eqref{a1} was studied in~\cite{Bergweiler2025}.
It was assumed that there exist $\beta,\rho>0$ such that
\begin{equation} \label{a2}
|F'(w)|\geq \beta |F(w)|
\quad \text{for}\ w\in\H_{>\rho}.
\end{equation}
Passing to a smaller value of $\beta$ if necessary, we may assume that 
this inequality holds for all $w\in W_R$.
We will see in Proposition~\ref{prop1} that it actually suffices to assume that 
it holds for all $w\in \partial W_R$.

If $f\in\B$ satisfies this condition,
then we say that $f$ has \emph{regular logarithmic transform}.
We will explain in Section~\ref{sec:regular} why this condition
can be considered as a regularity condition.

It was shown in~\cite[Proposition~9.1]{Bergweiler2025} that functions $f$ of the form
\begin{equation} \label{fpq}
f(z)=\int_0^z p(t)e^{q(t)} dt +c,
\end{equation}
with polynomials $p$ and $q$, and $c\in\C$, have regular logarithmic transform.
It is easy to see that this is also the case for functions $f$ of the 
form $f(z)=a e^z+be^{-z}$ with $a,b\in\C\setminus\{0\}$.

Recall that $V_R\defeq f^{-1}(\Delta_R)$. The connected components of $V_R$ are 
called \emph{tracts}.
\begin{theorem} \label{thm6}
Let $f\in\B$. Suppose that $f$ has finitely many tracts 
and regular logarithmic transform.
For $\lambda\in\C\setminus\{0\}$, put $f_\lambda(z)\defeq f(\lambda z)$.
Then~\eqref{a3} holds.
Moreover, if $|\lambda|$ is sufficiently small, then
\begin{equation} \label{y5}
\dim\!\left( J(f_\lambda) \setminus I(f_\lambda) \right) 
\leq 1+\frac{\log\log\log \frac{1}{|\lambda|}}{\log\log \frac{1}{|\lambda|}}.
\end{equation}
\end{theorem}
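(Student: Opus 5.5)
The plan is to follow the strategy used for the exponential family in~\cite{Bergweiler2021,Karpinska1999a}. We pass to the logarithmic transform, cover the non-escaping part of the Julia set by pullbacks of squares under inverse branches, and sum the diameters. The hypothesis~\eqref{a2} plays the role of the identity $E'=E$.

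\emph{Reduction to a disjoint type model.} Let $F$ be the logarithmic transform of $f$ on $W_R$. The logarithmic transform of $f_\lambda$ is $F_\lambda(w)=F(w+\log\lambda)$, defined on $W_R-\log\lambda$. Since $f$ has finitely many tracts, $W_R$ has, modulo $2\pi i$, finitely many components, and they lie in a half-plane $\H_{>\sigma}$. Hence for small $|\lambda|$ the domain $W_R-\log\lambda$ lies in $\H_{>\sigma+\log(1/|\lambda|)}$, far to the right of $\H_{>\log R}$. Thus $f_\lambda$ is of disjoint type. Its Julia set consists of the points whose orbits stay in the tracts, and the Fatou set is the basin of an attracting fixed point near $f(0)$. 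Non-escaping points of $J(f_\lambda)$ correspond, via $\exp$ (a locally bi-Lipschitz map which does not increase dimension), to points $w$ whose $F_\lambda$-orbit stays in $W_R-\log\lambda$ and satisfies $\re F_\lambda^n(w)\le M$ for infinitely many $n$. Taking a countable union over $M$, it suffices to bound $\dim$ of $K_M$, the set of $w$ with $\re F_\lambda^n(w)\le M$ for infinitely many $n$.

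\emph{Covering.} Write $L=\log(1/|\lambda|)$. Every point of the orbit has real part at least $L$ (roughly), so by~\eqref{a2} we have $|F_\lambda'|\ge\beta |F_\lambda|\ge\beta L$ along the orbit. We cover $K_M$ by the sets $\Phi(Q)$, where $Q$ ranges over unit squares $Q$ in the strip $\{\re w\in[L,M']\}$ and $\Phi$ ranges over compositions of inverse branches of $F_\lambda$ of length $n$, over all $n\ge N$. By Koebe distortion, using the full biholomorphicity of $F$ onto a half-plane together with a small enlargement of $Q$, we get $\operatorname{diam}\Phi(Q)\lesssim \prod_j |F_\lambda'(w_j)|^{-1} \le \prod_j (\beta |w_{j+1}|)^{-1}$, where $w_{j+1}=F_\lambda(w_j)$.

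The $t$-sum is estimated one step at a time. A single inverse branch over a unit square at height $\re w=x$, modulo $2\pi i$, comes from finitely many tracts, but it is repeated over all $2\pi i$ translates of the preimage square. The key one-step estimate is therefore
\[
\sum_{\text{preimage squares}} |F'|^{-t} \lesssim \sum_{k\in\Z} \frac{1}{(\beta|x+ 2\pi i k + \dots|)^{t}} \;\approx\; \int \frac{dy}{(\beta\sqrt{x^2+y^2})^t}\asymp \frac{x^{1-t}}{\beta^t (t-1)}.
\]
Here the sum is over the translates $y$ of the image point, and we use $|F_\lambda(w)|\ge \re F_\lambda(w)\ge L$. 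Summing also over the image squares at all heights $x\ge L$ is done by the same integral in two variables. With care, it gives a transfer operator bound of order $C\,L^{1-t}/(t-1)^2$, or something similar. Choosing $t=1+\frac{\log\log L}{\log L}$ makes $L^{1-t}=1/\log L$, while $(t-1)^{-1}=\log L/\log\log L$. This is exactly the borderline, so a finer version is needed.

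\emph{Main obstacle.} The delicate step is obtaining the precise exponent $\frac{\log\log\log(1/|\lambda|)}{\log\log(1/|\lambda|)}$ rather than a cruder one. I would weight the squares by height, using $\sum_j x_j^{-t}$ with the condition that infinitely often $x_j\le M$. The returns to bounded height cost only the one factor $x^{1-t}$, whereas excursions to large heights are controlled by the extra decay $|w|^{-t}$ in $|F'|^{-t}$. Concretely, I would show that the pressure-type quantity $P(t)=\sup_x \sum_{\text{preimages}} |(F_\lambda^{-1})'|^t$, restricted to blocks starting and ending at height at most $M'$, is less than $1$ for this $t$ once $L$ is large. This mirrors the argument of~\cite{Bergweiler2021}, with~\eqref{a2} replacing the explicit derivative of $\exp$, and with the finitely many tracts ensuring that only a bounded number of branches occur per period. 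Letting $|\lambda|\to0$ gives $\limsup\dim\le 1$. The lower bound $\dim\ge 1$ in~\eqref{a3} follows because $J(f_\lambda)\setminus I(f_\lambda)$ contains nontrivial continua; for instance, the boundary of the attracting basin contains the landing points of hairs and is non-degenerate. Alternatively, one uses the known fact that the non-escaping set has dimension at least $1$ for disjoint type functions in~$\B$.
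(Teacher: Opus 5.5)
Your covering scheme and your displayed one-step estimate are the right ingredients. That estimate is, up to the Koebe factor $4^t$ and a factor $N$ for the tracts, exactly the paper's Lemma~\ref{la10}. The gap is that you then discard it because of a miscalculation. As a result, the step that actually proves~\eqref{y5}, namely that your transfer quantity $P(t)$ is less than $1$, is never established; the block/excursion argument you put in its place is only announced.

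Two things go wrong. First, no summation over image squares at all heights is needed. Write $\Phi=\phi_1\circ\dots\circ\phi_n$. Sum $|\Phi'(z)|^t$ first over the outermost branch $\phi_1$, with the point $\phi_2\circ\dots\circ\phi_n(z)$ fixed. Bound that sum by its supremum over all points of real part at least $L-O(1)$, and proceed inductively. This gives $\sum_\Phi|\Phi'(z)|^t\le P(t)^n$ with $P(t)\lesssim N\beta^{-t}L^{1-t}/(t-1)$. The reason is that the height $x$ of the image point enters your one-step bound only through $x^{1-t}\lesssim L^{1-t}$. The paper does the same with disks: each disk $D(a,\rho)$ with $\re a\ge L-1$ and $\rho\le 1$ pulls back under $F_0$ to disks with $\sum_k\rho_k^t\le\rho^t/2$, and this is simply iterated.

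Second, your claim that the result is ``exactly borderline'' is wrong. For $t-1=\log\log L/\log L$ one has $L^{1-t}/(t-1)=\frac{1}{\log L}\cdot\frac{\log L}{\log\log L}=\frac{1}{\log\log L}\to0$. Hence $P(t)\le\frac12$ once $\log\log L$ exceeds a constant depending only on $N$ and $\beta$; the paper takes $160N/\beta^2$ with $\beta\le 1$. Even your spurious bound $L^{1-t}/(t-1)^2=\log L/(\log\log L)^2$ would not be borderline; it tends to $\infty$.

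Your primary argument for the lower bound in~\eqref{a3} is also incorrect. $J(f_\lambda)\setminus I(f_\lambda)$ in general contains no non-degenerate continuum. For example, for $E_\lambda$ with $0<\lambda<1/e$, all points of the hairs other than their endpoints escape. So the non-escaping set lies in the totally disconnected set of endpoints, and a non-degenerate but totally disconnected set gives no lower bound on dimension.

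What works is your alternative, made precise as in the paper. By Bara\'nski, Karpi\'nska and Zdunik, the hyperbolic dimension of every function in $\B$ is strictly greater than $1$. Compact forward invariant hyperbolic sets in $J(f_\lambda)$ consist of points with bounded orbits, so they lie in $J(f_\lambda)\setminus I(f_\lambda)$.

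Your reduction to a disjoint type model is fine. The paper instead applies Theorem~\ref{thm5} directly to $f_\lambda$ with $M=1/|\lambda|$. It uses that $R_0$, $\beta$ and $N$ are the same for $f_\lambda$ as for $f$, and that $D(0,M)\subset F(f_\lambda)$, so $J(f_\lambda)=J_{\geq M}(f_\lambda)$.
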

We remark that defining $f_\lambda(z)\defeq \lambda f(z)$
instead of $f_\lambda(z)\defeq f(\lambda z)$ leads to the same conclusion,
since the functions $z\mapsto \lambda f(z)$ and
$z\mapsto f(\lambda z)$ are conjugate to each other.
The examples $f(z)=e^z$ or, more generally, $f(z)=ae^z+be^{-z}$ for which we have~\eqref{y6}
show that the bound given by~\eqref{y5} is essentially best possible.

An entire function $f$ is said to be of \emph{finite order} 
if there exists $\mu>0$ such that $|f(z)|\leq \exp(|z|^\mu)$ if $|z|$ is 
sufficiently large. 
The Denjoy--Carleman--Ahlfors theorem (see \cite[Chapter 5, Theorem 1.2]{Goldberg2008} or
\cite[\S XI.4]{Nevanlinna1953}) says that an entire function of finite order has 
only finitely many tracts.
Thus the hypothesis in Theorem~\ref{thm6}
that $f$ has finitely many tracts is always satisfied if $f$ has finite order.

For $M>0$ we put
\begin{equation} \label{eq:Jgeqrho}
J_{\geq M}(f) \defeq \{z\in J(f) \colon \lvert f^n(z)\rvert\geq M \text{ for all $n\geq 0$}\}.
\end{equation}

We will deduce Theorem~\ref{thm6} from the following result.
\begin{theorem} \label{thm5}
Let $f\in\B$. Suppose that $f$ has finitely many tracts and regular logarithmic transform.
Let $R_0\geq 1$ with $\overline{\sing(f^{-1})}\cup\{f(0)\}\subset D(0,R_0)$
and let $\beta$ be such that~\eqref{a2} holds with $\rho=\log R_0$.
Let $N$ be the number of tracts of~$f$, let $R\defeq \max\{R_0,\exp(4/\beta)\}$ and let $1<t\leq 2$. If
\begin{equation} \label{y3}
M\geq \max\left\{e^{5\pi} R, \exp\!\left(\!\left(\frac{10\cdot 4^t\cdot N}{(t-1)\cdot\beta^t}\right)^{1/(t-1)}\right)\right\},
\end{equation}
then
\begin{equation} \label{y4}
\dim\!\left( J_{\geq M}(f) \setminus I(f) \right) \leq t.
\end{equation}
\end{theorem}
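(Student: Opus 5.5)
Since $\exp$ is locally bi-Lipschitz and countable unions do not increase Hausdorff dimension, we work in logarithmic coordinates with the logarithmic transform $F$ from~\eqref{a0}. Write $\rho\defeq\log M$. A point $z\in J_{\geq M}(f)\setminus I(f)$ has orbit in $\Delta_M\subset\Delta_R$, so $\log z$ has an orbit under $F$ that stays in $\H_{\geq \rho}\cap W_R$. Since $z\notin I(f)$, there is a constant $K$ with $\re F^n(w)\leq K$ for infinitely many~$n$. Let $S_K$ be the set of $w$ whose $F$-orbit stays in $\H_{\geq\rho}$ and returns infinitely often to the strip $P_K\defeq\{\rho\leq \re w\leq K\}$. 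By $2\pi i$-periodicity it suffices to bound $\dim (S_K\cap Q)$ by $t$ for squares $Q$ of side $2\pi$ in $P_K$; the case of a general starting point reduces to this by pulling back along finitely many inverse branches.

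We cover by cylinders. Partition $\H_{\geq\rho}$ into squares $Q$ of side $2\pi$ (roughly $\{m\leq\re w< m+2\pi,\ \ell\le\im w<\ell+2\pi\}$). For each component $U$ of $W_R$ let $G_U$ be the inverse branch of $F$ onto $U$. By~\eqref{a2} and the Koebe distortion theorem (on discs of radius comparable to $\re w-\log R$, which is at least $\tfrac12\re w$ since $\rho\geq 5\pi+\log R$), we have $\operatorname{diam} G_U(Q)\le C/(\beta|w_Q|)$, where $w_Q$ is the center of $Q$; the bound $R\geq e^{4/\beta}$ is used to control the constants. A point of $S_K\cap Q_0$ is determined by a sequence of squares $Q_1,\dots,Q_n$ with $Q_n\subset P_K$, where $n$ is a return time, and by the corresponding branches. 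Its cylinder has diameter at most about $\prod_{j=1}^n \frac{4}{\beta|w_{Q_j}|}$, using the chain rule together with distortion. Since returns happen infinitely often, for each $n_0$ the set $S_K\cap Q_0$ is covered by the cylinders of length $n\geq n_0$ ending in $P_K$.

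The key step is a $t$-sum estimate. We need the count of preimages: for each square $Q_j$, the number of squares $Q_{j-1}$ in $\H_{\geq\rho}$ with $G(Q_j)$ meeting $Q_{j-1}$ is controlled by $N$. The finitely many tracts come into play here: each tract contributes translates by $2\pi i$, so there is about one branch per $2\pi$ of height per tract. Summing over all possible $Q_j$, we then need
\[
\sum_{Q\subset\H_{\geq\rho}} \Bigl(\frac{4}{\beta|w_Q|}\Bigr)^t\cdot N \;\lesssim\; \frac{4^tN}{\beta^t}\int_{\re w\geq\rho}\frac{dA(w)}{|w|^t}.
\]
This integral diverges for $t\le2$, so a more careful bookkeeping is required. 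What we really have is a sum over $Q_j$ of $|G'|^t$. In the $Q_{j-1}$ coordinate the preimage squares have area about $|G'|^2$, so summing $|G'|^t$ over the preimages of $Q_j$ inside a fixed square $Q_{j-1}$ gives roughly $\int |F'|^{-t+2}\cdot|F'|^{-2}\dots$. The cleaner way is to sum over the forward image: for fixed $Q_{j-1}$, we sum over $Q_j\subset F(Q_{j-1})$. Because $F$ is univalent on each component, the image of $Q_{j-1}\cap U$ is a region in which the squares $Q_j$ have weight $|G'(w_{Q_j})|^t\approx(\beta|w_{Q_j}|)^{-t}$. Summing over the $Q_j$ in one strip of height $2\pi$ in $\re$ gives $\sum_\ell (\beta|m+i\ell|)^{-t}\approx \beta^{-t}|m|^{1-t}\cdot\frac{2}{t-1}$, and then summing over $m\geq\rho$ with the real part controlled by $|F|\geq$ yields the factor $\rho^{1-t}$ up to constants. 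With the counting factor $N$ from the tracts, the $t$-pressure per step is at most about $\frac{10\cdot4^tN}{(t-1)\beta^t}(\log M)^{1-t}$. By~\eqref{y3} this is $\le1$, giving $\sum_{\text{cylinders of length }n}\operatorname{diam}^t\le C$ uniformly in $n$.

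That uniform bound alone gives only $\mathcal H^t<\infty$ after a limiting argument, which yields $\dim\le t$ and is exactly~\eqref{y4}. Strictly, one takes $t'>t$ slightly larger so that the per-step factor is $<1$ and the sums tend to $0$, or one notes that uniform boundedness already implies that $t$-dimensional Hausdorff measure is finite. Restricting the last square to lie in $P_K$ keeps the final factor bounded. The main obstacle is the counting step: showing that the number of pairs (branch, square) over a given image square is at most of order $N$ per unit height. This uses univalence on each tract component, the $2\pi i$-periodicity, and the finitely many tracts. I would try first to establish it by noting that the components of $W_R$ within a horizontal strip of height $2\pi$ number at most $N$, and that each component meets a given vertical line in a controlled set because $F$ expands.
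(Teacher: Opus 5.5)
\paragraph{The per-step estimate is not established.} Your overall architecture is the right one: logarithmic coordinates, covering the non-escaping points by pullbacks, and a per-step contraction of $t$-sums by a factor of order $\frac{4^tN}{(t-1)\beta^t}(\log M)^{1-t}$. But the per-step estimate, which is the heart of the proof, is not established, and in the form you organize it, it is false.

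You fix $Q_{j-1}$ and sum $(\beta|w_{Q_j}|)^{-t}$ over the squares $Q_j$ meeting $F(Q_{j-1}\cap U)$. This forward sum is not uniformly bounded. If $Q_{j-1}\cap U\supset D(a,\delta)$, then by \eqref{la2c} and \eqref{a2} the image contains $D(F(a),\beta\delta|F(a)|/4)$. The squares in this disk contribute an amount of order $\beta^{2-t}\delta^2|F(a)|^{2-t}$, which is unbounded for $t<2$. This already happens for $f(z)=e^z$ along the positive real axis.

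Your repair does not help. The column sum $\sum_\ell|m+i\ell|^{-t}\approx m^{1-t}/(t-1)$ is fine, but you then sum over $m\geq\rho$, and $\sum_{m\geq\rho}m^{1-t}=\infty$ for $t\leq 2$. The constant you arrive at agrees with the paper's, but it is asserted rather than derived.

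\paragraph{What is missing: fix the image, sum over preimages, work modulo $2\pi i$.} The paper chooses a fundamental strip $S(0)$ bounded by two consecutive lifts of a curve in $\C\setminus\overline{V_R}$. This strip contains exactly $N$ components $W_{R,j}$ of $W_R$, each mapped biholomorphically onto $\H_{>\log R}$ by $F$ with inverse $\phi_j$. One then studies $F_0=\pi_0\circ F$.

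Take a disk $D(w,r)$ with $\re w\geq L-1$ and $r\leq 1$. Its $F_0$-preimages are the sets $\phi_j(D(w+2\pi ik,r))$, $1\leq j\leq N$, $k\in\Z$. Koebe together with \eqref{a2} places each in a disk of radius $4r/(\beta|w+2\pi ik|)$. The only summation is over the $2\pi i$-translates of the \emph{image point}:
\[
\sum_k|w+2\pi ik|^{-t}<\frac{5}{(t-1)L^{t-1}} .
\]
There is no sum over real parts.

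Hence one pullback multiplies the $t$-sum by at most $NCL^{1-t}\leq\frac12$, with $C$ as in \eqref{v3}, under \eqref{y3}. Iterating gives covers of $F_0^{-n}(D(w,r))\cap J^*_{\geq L}(F_0)$ with $t$-sum at most $2^{-n}r^t$. One then sums over $n\geq n_0$ and over finitely many disks covering the bounded return region, which yields $\mathcal H^t=0$.

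The geometric decay matters in that last step. Return times vary, so one must sum over all $n\geq n_0$. Your remark that a merely uniform bound already gives $\mathcal H^t<\infty$ is therefore not justified, although your alternative of passing to $t'>t$ works.

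Finally, the ``counting step'' you single out as the main obstacle is trivial in this setup: each translate has exactly one preimage in each $W_{R,j}$. The summation you treat as routine is where the actual content lies.
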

Note that the bound for  $M$ depends only on~$t$, $R_0$, $N$ and $\beta$.

As already mentioned, Urba\'nski and Zdunik
showed that~\eqref{a3} implies that $\dim (J(E_\lambda)\setminus I(E_\lambda))<2$
for $|\lambda|<1/e$. In fact, their proof shows that this
holds if $E_\lambda$ has an attracting fixed point.

We will give an analogous corollary of Theorem~\ref{thm6}.
In order to state this corollary we recall that an entire function $f$ is said to be 
of \emph{disjoint type} if it belongs to $\B$ and if the closure of $\sing(f^{-1})$ is 
contained in the immediate attracting basin of an attracting fixed point.
Since the immediate basin of an attracting fixed point always contains at least one
singularity of the inverse function 
(see \cite[Theorem~7]{Bergweiler1993} or \cite[Theorem~2.3]{Schleicher2010})
and since $\sing(E_\lambda^{-1})=\{0\}$, it follows 
that $E_\lambda$ is of disjoint type if and only if $E_\lambda$
has an attracting fixed point.

\begin{corollary} \label{thm4}
Let $f\in\B$ be of disjoint type.
Suppose that $f$ has finitely many tracts and regular logarithmic transform.
Then
$\dim\!\left( J(f) \setminus I(f) \right) <2$.
\end{corollary}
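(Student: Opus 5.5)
Let $f$ be of disjoint type, with attracting fixed point $\xi$ and immediate basin $A$. We follow the Urba\'nski--Zdunik strategy: view $f$ as a member of the family $f_\lambda$, and transfer the dimension bound from small $\lambda$ to $\lambda=1$ by a quasiconformal conjugacy on the Julia sets.

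\textbf{Step 1: a disjoint-type family.} Disjoint type gives a bounded Jordan domain $U$ with $\overline{\sing(f^{-1})}\subset U$ and $\overline{f(U)}\subset U$. For $\lambda$ in a suitable domain containing the segment from a small value to $1$, consider $g_\lambda(z)\defeq f(\lambda z)$, or, more conveniently, $\lambda f(z)$ (conjugate, as remarked after Theorem~\ref{thm6}). For all $\lambda$ in the closed disk $\{|\lambda|\le 1\}$ we have $\lambda\,\overline{f(U)}$ contained in a suitably enlarged region only if $U$ is star-shaped around $0$. In general we instead use the path $\lambda\in(0,1]$ together with a holomorphic motion argument: by Rempe's rigidity results for disjoint-type functions (Rempe-Gillen, \emph{Acta Math.} 2009), any two disjoint-type maps in the same quasiconformal equivalence class are quasiconformally conjugate on their Julia sets. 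Here $f$ and $\lambda f$ are clearly quasiconformally equivalent, since $\lambda f=\phi\circ f$ with $\phi$ affine.

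This step is the main obstacle: we must ensure that $\lambda f$ is of disjoint type for small $|\lambda|$. This holds because $\lambda f$ maps $D(0,r)$ into $D(0,|\lambda|\max_{|z|\le r}|f(z)|)\subset D(0,r/2)$ for suitable $r$ and small $\lambda$, while the singular values $\lambda\,\sing(f^{-1})$ lie in that disk. So Theorem~\ref{thm6} applies to $g=\lambda f$, which has finitely many tracts and regular logarithmic transform (the logarithmic transform changes only by an additive constant, so \eqref{a2} persists with a slightly smaller $\beta$). Consequently $\dim(J(g)\setminus I(g))<3/2$, say, for a fixed small $\lambda$.

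\textbf{Step 2: conjugacy and dimension.} Rempe's theorem gives a quasiconformal $\theta\colon\C\to\C$ with $\theta\circ g=f\circ\theta$ on $J(g)$. Moreover $\theta$ is conformal (its dilatation vanishes) off a neighborhood of $F$-type regions, and in fact can be chosen so that $\theta(J(g))=J(f)$ and $\theta(I(g)\cap J(g))=I(f)\cap J(f)$. The escaping set is preserved because $\theta$ is a homeomorphism close to affine near $\infty$ (the conjugacy satisfies $|\theta(z)|\asymp |z|$ by construction). Since $I(f)\subset J(f)$ for $f\in\B$, $\theta$ maps $J(g)\setminus I(g)$ onto $J(f)\setminus I(f)$.

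A $K$-quasiconformal map alone does not preserve the property ``dimension $<2$'' quantitatively unless we use Astala's theorem. Astala's theorem gives
\[
\dim\theta(E)\le \frac{2K\dim E}{2+(K-1)\dim E},
\]
which is $<2$ whenever $\dim E<2$. Hence $\dim(J(f)\setminus I(f))<2$. Should the global conjugacy only be available locally or on a subset, we would cover $J(f)\setminus I(f)$ by countably many preimages $f^{-n}$ of a piece controlled by $\theta$; countable stability of Hausdorff dimension, together with the fact that conformal branches of $f^{-n}$ preserve dimension, then finishes the argument.
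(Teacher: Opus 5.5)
Your proof is correct and follows the paper's route: for small $|\lambda|$, $\lambda f$ (conjugate to the paper's $f(\lambda z)$) is of disjoint type and satisfies $\dim(J(\lambda f)\setminus I(\lambda f))<2$ by Theorem~\ref{thm6}. You then transfer this to $f$ via Rempe's quasiconformal conjugacy on Julia sets and Astala's theorem, where the paper uses the weaker result of Gehring and V\"ais\"al\"a (so, contrary to your remark, Astala is not actually needed). Two asides are superfluous and not quite right: escaping points correspond simply because $\theta$ is a homeomorphism of $\C$, with no need for $|\theta(z)|\asymp|z|$, which fails for general quasiconformal maps; and your fallback for a merely local conjugacy would not work as stated, since a non-escaping point need not have an iterate in $J_{\geq R}(f)$.
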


The restriction to functions with regular logarithmic transform is necessary
in Theorems~\ref{thm6} and~\ref{thm5} and Corollary~\ref{thm4}.
Indeed, Rempe~\cite{Rempe2014} has shown that there exists an entire
function $f$ of finite order and disjoint type for which
$\dim (J(f)\setminus I(f))=2$.


\section{Preliminaries}\label{sec:prelim}
As already noted in the introduction,
Eremenko and Lyubich~\cite[Section~2]{Eremenko1992} used the logarithmic change
of variable to prove that $I(f)\subset J(f)$ for $f\in\B$.
The following result is proved by essentially the same method.
\begin{lemma} \label{la9}
Let $f\in\B$ and $R>0$ with $\overline{\sing(f^{-1})}\cup\{f(0)\}\subset D(0,R)$.
If $z\in\C$ satisfies $|f^n(z)|\geq e^{5\pi} R$  for all $n\geq 0$,
then $z\in J(f)$.
\end{lemma}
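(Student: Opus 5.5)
We argue by contradiction, in the spirit of Eremenko and Lyubich. Suppose $z\in F(f)$ and $|f^n(z)|\geq e^{5\pi}R$ for all $n\geq 0$. Since $F(f)$ is open, we can choose a small disk $D(z,\delta)\subset F(f)$. We pass to logarithmic coordinates. Because $|f^n(z)|>R$ and $R>|f(0)|$, each $f^n(z)$ lies in $V_R$, so we pick $w_0\in W_R$ with $\exp w_0=z$ (if $|z|\le 1$ one first replaces $z$ by $f(z)$, which is harmless since $F(f)$ is completely invariant). By \eqref{a0}, the lifts $w_n$ of $f^n(z)$ can be chosen with $w_{n+1}=F(w_n)$. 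Then $\re w_n=\log|f^n(z)|\geq \log R+5\pi$ for all $n$, and the orbit stays in $W_R\cap\H_{\geq \log R+5\pi}$. Put $U_0=\exp^{-1}(D(z,\delta))$ near $w_0$, a small disk around $w_0$.

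The key step is to show that $|(F^n)'(w_0)|\to\infty$ at a geometric rate, and then to contradict normality. From \eqref{a1} and $\re F(w_n)-\log R=\re w_{n+1}-\log R\geq 5\pi$ we get
\[
|F'(w_n)|\geq \frac{5\pi}{4\pi}=\frac54 .
\]
Hence $|(F^n)'(w_0)|\geq (5/4)^n\to\infty$. This is precisely where the constant $e^{5\pi}$ is used.

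To reach a contradiction we use Koebe distortion. The branch of $F^{-n}$ sending $w_n$ to $w_0$ is defined on the disk $D(w_n,5\pi)$. Indeed, $D(w_n,5\pi)\subset \H_{>\log R}$, and each branch of $F^{-1}$ maps $\H_{>\log R}$ univalently into $W_R$. We do need the intermediate images of $D(w_k,5\pi)$ under the inverse branches to stay in $\H_{>\log R}$. This follows from \eqref{a1} by Koebe: since $|F'|\ge 5/4$ along the orbit, the inverse branch contracts disks of radius comparable to $\re w_k-\log R$. To make this clean, I would instead apply Koebe's $1/4$-theorem to the univalent map $F^{-1}$ on the half-plane disk $D(w_{k+1},\re w_{k+1}-\log R)$, as in \cite{Eremenko1992}.

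By Koebe's $1/4$-theorem, $F^{-n}(D(w_n,5\pi))$ then contains the disk
\[
D\!\left(w_0,\frac{5\pi}{4|(F^n)'(w_0)|}\right)\subset U_0
\]
for large $n$. So $F^n(U_0)\supset D(w_n,5\pi)$. Projecting by $\exp$, and using that a disk of radius $5\pi>\pi$ contains a vertical segment of length $2\pi$, we see that $f^n(D(z,\delta))$ contains a full circle $\{|\zeta|=r_n\}$ with $r_n\ge e^{5\pi}R$. By contrast, $(F^n)'(w_0)\to\infty$ and $\exp$ is locally bi-Lipschitz, so $|(f^n)'(z)|\,|z|/|f^n(z)|\to\infty$.

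Normality gives the contradiction. On $D(z,\delta)\subset F(f)$ a subsequence $f^{n_k}$ converges locally uniformly, either to a holomorphic limit or to $\infty$. The limit is not finite, since $|f^n(z)|\ge e^{5\pi}R$ and the images contain circles of radius $r_n$ together with their logarithmic derivative blowing up. If the limit is $\infty$, then $\log f^{n_k}$, i.e. $F^{n_k}$ on $U_0$, would satisfy a Schottky/Bloch-type bound. Precisely, the family $\{F^n\}$ on $U_0$ omits the values in $\H_{<\log R}$ and hence is normal, with derivatives at $w_0$ bounded by $C\cdot\re F^n(w_0)$ by Schwarz--Pick for maps into the half-plane. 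This is compatible with growth, though, so the contradiction must come from the images $F^n(U_0)$ containing $D(w_n,5\pi)$ while $F^n(U_0)$ lies in a single component of $W_R$. A component $T$ of $W_R$ contains no vertical segment of length $2\pi$, because $\exp$ is injective on $T$ (tracts are simply connected and the disjoint translates $T+2\pi i k$ are distinct components). But $D(w_n,5\pi)$ contains such a segment, so $F^n(U_0)\not\subset W_R$. Yet $F^{n+1}$ must be defined on $U_0$, since $f^{n+1}(D(z,\delta))$ stays in $\Delta_R$ for a disk small enough and $n$ large, by the normality with limit $\infty$. This is the contradiction.

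The main obstacle is this last step: justifying that, on a fixed small neighbourhood, all iterates stay in $\Delta_R$ (via normality and the escape-or-bounded dichotomy), so that $F^n$ is defined on $U_0$ for all $n$. If that is awkward, I would instead argue directly. If some $f^{n_k}\to a\in\C$ with $|a|\ge e^{5\pi}R$, then the images of the shrinking disks eventually lie in a small disk around $a$, contradicting that they contain circles of radius at least $e^{5\pi}R$ centred at $0$. If instead $f^{n}\to\infty$ locally uniformly, we obtain the contradiction with the injectivity of $\exp$ on tracts described above.
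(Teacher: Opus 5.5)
There is a genuine gap. Your ingredients match the paper's sketch: the expansion $|(F^n)'(w_0)|\geq (5/4)^n$ from \eqref{a1}, univalence, Koebe's one quarter theorem, and the fact that a component of $W_R$ contains no disk of radius greater than $\pi$. The way you assemble them, however, fails at two points.

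\emph{First}, you never justify that the branch of $F^{-n}$ with $w_n\mapsto w_0$ exists on $D(w_n,5\pi)$. The branch $\phi$ of $F^{-1}$ with $\phi(w_n)=w_{n-1}$ maps $\H_{>\log R}$ onto a component of $W_R$. Such components need not lie in $\H_{>\log R}$: for $f=\exp$ and $R>1$, $W_R$ contains the real interval $(\log\log R,\infty)$. When $\re w_n=\log R+5\pi$, the disk $D(w_n,5\pi)$ touches $\partial\H_{>\log R}$, so Koebe's distortion theorem (take $\rho\to 1$ in \eqref{la2a}) gives no control on $\phi(D(w_n,5\pi))$, and you cannot apply the next inverse branch. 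Your proposed fix via the one quarter theorem only yields inclusions of the form $\phi(D)\supset\dots$, which is the wrong direction for keeping images inside $\H_{>\log R}$.

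\emph{Second}, even if $F^{-n}$ were univalent on $D(w_n,5\pi)$, the inclusion $F^{-n}(D(w_n,5\pi))\supset D\bigl(w_0,5\pi/(4|(F^n)'(w_0)|)\bigr)\subset U_0$ does not give $F^n(U_0)\supset D(w_n,5\pi)$. That conclusion needs the reverse inclusion $F^{-n}(D(w_n,5\pi))\subset U_0$, which is an upper distortion bound requiring univalence on a larger disk. Everything after this rests on that non sequitur: the full circles in $f^n(D(z,\delta))$, the exclusion of finite limits (which, incidentally, need not be constant), and the final contradiction.

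The step you flag as the main obstacle is in fact the heart of the proof, and it needs no case distinction on limit functions. The family $\{f^n\}$ is normal near $z$, hence spherically equicontinuous at $z$. Since $|f^n(z)|\geq e^{5\pi}R$ for all $n$, and $\overline{D(0,R)}$ has positive spherical distance from $\{|\zeta|\geq e^{5\pi}R\}\cup\{\infty\}$, there is $\varepsilon>0$ with $f^{n}(D(z,\varepsilon))\subset\Delta_R$ for all $n\geq 0$. Equivalently, $f^n(D(z,\varepsilon))\subset V_R$ for all $n$.

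Lifting, there is $\delta>0$ with $F^n(D(w_0,\delta))\subset W_R$ for all $n$. Each image is connected, so it lies in a single component of $W_R$, and hence $F^n$ is univalent on $D(w_0,\delta)$. Now apply the one quarter theorem \emph{forward} to $F^n$ on $D(w_0,\delta)$: this gives $F^n(D(w_0,\delta))\supset D\bigl(w_n,(5/4)^n\delta/4\bigr)$. As you correctly observe, a component of $W_R$ contains no disk of radius greater than $\pi$, because $\exp$ is injective on it. This is a contradiction for large $n$. That is exactly the paper's argument; the inverse branches on $D(w_n,5\pi)$ and the circles are unnecessary.
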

We will sketch the proof below.
It will use the Koebe one quarter theorem.
Since later we will also need the Koebe distortion theorem, we state both results here.
\begin{lemma} \label{la-koebe}
Let $g\colon D(a,r)\to\C$ be univalent, $\rho \in (0,1)$ and
$z\in D(a,\rho r)$. Then
\begin{equation}\label{la2a}
\frac{\rho}{(1+\rho)^2}
|g'(a)|r
\leq |g(z)-g(a)|
\leq
\frac{\rho}{(1-\rho)^2}
|g'(a)|r 
\end{equation}
and
\begin{equation}\label{la2b}
\frac{1-\rho}{(1+\rho)^3}
|g'(a)|
\leq |g'(z)|
\leq
\frac{1+\rho}{(1-\rho)^3}
|g'(a)| .
\end{equation}
Moreover,
\begin{equation}\label{la2c}
g(D(a,r))\supset
D\!\left(g(a),\frac14 |g'(a)|r\right).
\end{equation}
\end{lemma}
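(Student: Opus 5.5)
The plan is to reduce everything to the classical theorems for the normalized class $S$ of univalent functions $h$ on the unit disk $D(0,1)$ with $h(0)=0$ and $h'(0)=1$. Given $g$ univalent on $D(a,r)$, we have $g'(a)\neq 0$, so we may define
\begin{equation}
h(\zeta)\defeq \frac{g(a+r\zeta)-g(a)}{r\,g'(a)},\qquad \zeta\in D(0,1).
\end{equation}
This $h$ is univalent, being a composition of $g$ with affine maps, and it satisfies $h(0)=0$ and $h'(0)=1$, so $h\in S$. We also have the relations
\begin{equation}
g(z)-g(a)=r\,g'(a)\,h(\zeta)\qquad\text{and}\qquad g'(z)=g'(a)\,h'(\zeta),
\end{equation}
where $z=a+r\zeta$. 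Thus $|z-a|<\rho r$ corresponds exactly to $|\zeta|<\rho$.

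Next, apply the Koebe growth and distortion theorems for $h\in S$ with $x=|\zeta|$:
\begin{equation}
\frac{x}{(1+x)^2}\leq |h(\zeta)|\leq \frac{x}{(1-x)^2}
\qquad\text{and}\qquad
\frac{1-x}{(1+x)^3}\leq |h'(\zeta)|\leq \frac{1+x}{(1-x)^3}.
\end{equation}
To replace $x$ by $\rho$, use monotonicity in $x\in[0,1)$. The functions $x/(1-x)^2$ and $(1+x)/(1-x)^3$ are increasing, and $(1-x)/(1+x)^3$ is decreasing. Hence for $x<\rho$ we get the upper bound in \eqref{la2a} and both bounds in \eqref{la2b}.

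The function $x/(1+x)^2$ is increasing, not decreasing, so the lower bound in \eqref{la2a} cannot hold for all $z\in D(a,\rho r)$; it fails at $z=a$. That bound must therefore be read for $|z-a|=\rho r$, which is the case used in applications. There it follows directly from the growth theorem with $x=\rho$. Similarly, the upper bound in \eqref{la2a} holds for $|z-a|\leq\rho r$. Multiplying through by $r|g'(a)|$, respectively by $|g'(a)|$, gives \eqref{la2a} and \eqref{la2b}.

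Finally, the Koebe one quarter theorem gives $h(D(0,1))\supset D(0,1/4)$. Transporting this back through the affine maps gives $g(D(a,r))\supset D\bigl(g(a),\tfrac14|g'(a)|r\bigr)$, which is \eqref{la2c}. Since all the substantive content is the classical theorems for the class $S$, I would cite these, e.g.\ from Pommerenke or Duren. The only delicate point is the monotonicity observation, which forces the restriction of the left inequality in \eqref{la2a} to the circle $|z-a|=\rho r$.
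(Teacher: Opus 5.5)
Your proposal is correct and matches the paper's approach: the paper simply notes that the lemma follows from the normalized statements for $a=0$, $r=1$ (citing Duren) by the affine rescaling you write down. Your observation about the left inequality in \eqref{la2a} is also right: since $x/(1+x)^2$ is increasing on $[0,1)$, that bound fails at $z=a$ and in fact holds exactly for $\rho r\leq |z-a|<r$. This misstatement is harmless for the paper, which uses only the upper bound in \eqref{la2a} (with $\rho=1/2$, in Lemma~\ref{la10}) and the one-quarter statement \eqref{la2c} (in Lemma~\ref{la9}).
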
 
In most textbooks (e.g., \cite[Theorems 2.3, 2.5 and~2.6]{Duren1983})
the results are stated only for the case that $a=0$ and $r=1$, but the above 
version easily follows from this.

\begin{proof}[Sketch of proof of Lemma~\ref{la9}] 
Let $z\in \Delta_{e^{5\pi}R}$ such that
$f^n(z)\in \Delta_{e^{5\pi}R}$ for all $n\geq 1$.
With the logarithmic transform $F\colon W_R\to \H_{>\log R}$ as in~\eqref{a0}
and with $z=e^w$ this condition 
takes the form $\re F^n(w)\geq 5\pi+\log R$ for all~$n\geq 0$.
It then follows from~\eqref{a1} that 
$|(F^n)'(w)|\geq (5/4)^n$ for all $n\geq 0$.

If $z\in F(f)$, then there exists $\varepsilon>0$ such that
$f^n(D(z,\varepsilon))\subset V_R=f^{-1}(\Delta_R)$
for all $n\geq 0$. Hence there exists $\delta>0$ 
such that $F^n(D(w,\delta))\subset W_R$ for all $n\geq 0$.
By Koebe's one quarter theorem, $F^n(D(w,\delta))$ contains a disk around $F^n(w)$
of radius $|(F^n)'(w)|\delta/4$, which is at least $5^n\delta/4^{n+1}$.
On the other hand, $W_R$ does not contain a disk of radius greater than~$\pi$.
This is a contradiction if $n$ is large enough.
\end{proof}
\begin{remark}
Rempe~\cite{Rempe2023} proved that the constant $4\pi$ in~\eqref{a1} may
be replaced by~$2$. This yields that the constant $5\pi$ in Lemma~\ref{la9} 
may be replaced by any constant greater than~$2$.
\end{remark}

\section{Proof of Theorems~\ref{thm6} and~\ref{thm5}}\label{sec:proof-thm2}
\begin{lemma} \label{la10}
Let $f$, $R$, $\beta$ and $N$ be as in Theorem~\ref{thm5} and
let $F\colon W_R\to \H_{>\log R}$ be the logarithmic transform as in~\eqref{a0}.

Let $U$ be a connected component of $W_R$, let $L\geq \log R+3$ and let $w\in W_R\cap \H_{\geq L-1}$.
Let $0<r\leq 1$ and $1<t\leq 2$. Then 
\begin{equation} \label{v1}
\bigcup_{k\in\Z} F^{-1}(D(w+2\pi i k,r)) \cap U\cap \H_{\geq L}
\end{equation}
can be covered by disks $D(a_k,r_k)$, $k\in\Z$, such that 
$a_k\in\H_{\geq L-1}$ for all $k$ and
\begin{equation} \label{v2}
\sum_{k\in\Z} r_k^t < \frac{C}{L^{t-1}} r^t ,
\end{equation}
where 
\begin{equation} \label{v3}
C \defeq\frac{5 \cdot 4^t }{(t-1)\beta^t} .
\end{equation}
\end{lemma}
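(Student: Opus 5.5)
Let $G\colon \H_{>\log R}\to U$ be the branch of $F^{-1}$ onto $U$. The set in~\eqref{v1} is then the union over $k\in\Z$ of $G(D(w+2\pi i k,r))\cap\H_{\geq L}$. For each $k$ we will cover this piece by a single disk and then sum. Put $w_k\defeq w+2\pi i k$. Since $\re w\geq L-1\geq \log R+2$, the disk $D(w_k,2)$ lies in $\H_{>\log R}$, so $G$ is univalent there. We apply Lemma~\ref{la-koebe} with radius $2$ and $\rho=r/2\leq 1/2$. By~\eqref{la2a}, $G(D(w_k,r))$ lies in a disk around $G(w_k)$ of radius
\[
\frac{r/2}{(1-r/2)^2}\,2|G'(w_k)|\leq 4r|G'(w_k)|.
\]
By~\eqref{a2}, $|G'(w_k)|=1/|F'(G(w_k))|\leq 1/(\beta|w_k|)$. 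So the $k$-th piece is covered by $D(G(w_k),r_k)$ with $r_k=4r/(\beta|w_k|)$.

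We keep only those $k$ for which the piece meets $\H_{\geq L}$. For these, the centre satisfies $\re G(w_k)\geq L-r_k$. We need $r_k\leq 1$, so that $a_k\defeq G(w_k)\in\H_{\geq L-1}$. This follows because $|w_k|\geq \re w\geq \log R+2\geq 4/\beta$, using $R\geq \exp(4/\beta)$, which gives $r_k\leq r\leq 1$.

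The sum then becomes
\[
\sum r_k^t\leq \frac{4^t r^t}{\beta^t}\sum_{k}' \frac{1}{|w_k|^t},
\]
where $\sum'$ runs over the admitted $k$. Here is the main obstacle. Summing $|w+2\pi i k|^{-t}$ over all $k$ only gives $\lesssim (\re w)^{1-t}$, and $\re w$ may be much smaller than $L$ (only $\re w\geq L-1$ is assumed, and $\re w$ could even be close to $L$). So we must exploit the constraint that $G(w_k)$ has real part at least about $L$.

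The plan is to use the growth of $F$ in $U$. From~\eqref{a2}, $|(\log F)'|\geq\beta$ on $U$, and this forces the real parts of preimages to stay small unless $|w_k|$ is large. More precisely, integrating $|G'(\zeta)|\leq 1/(\beta|\zeta|)$ along a path in $\H_{>\log R}$ from a fixed base point gives
\[
\re G(w_k)\leq c+\frac{1}{\beta}\log|w_k|+O(1).
\]
Hence $\re G(w_k)\geq L-1$ forces $|w_k|\geq e^{\beta(L-c')}$, which is a much larger threshold than needed.

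A cleaner route avoids constants depending on the base point. It uses the finitely many tracts together with the $2\pi i$-periodicity to argue that $|k|\gtrsim L$ is needed for admitted $k$ with small $|w_k|$. Then
\[
\sum_{|k|\geq L}|w_k|^{-t}\lesssim \frac{1}{(t-1)L^{t-1}},
\]
while the at most boundedly many remaining terms with $|w_k|\geq L-1$ contribute $\lesssim L^{-t}\leq L^{1-t}$. This should yield the constant $5\cdot 4^t/((t-1)\beta^t)$.

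I expect this counting step, bounding $\sum'|w_k|^{-t}$ by roughly $5/((t-1)L^{t-1})$, to be the crux. What I would try first is to compare the sum with $\int_{L-1}^\infty x^{-t}\,dx$, using that the imaginary parts $\im w_k$ are $2\pi$-spaced and that only $O(1)$ of them lie below $L$ in modulus.
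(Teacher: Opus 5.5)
Your covering step is correct and essentially the paper's. The inverse branch $G$ is univalent on $D(w_k,2)$, Koebe's theorem gives $G(D(w_k,r))\subset D(G(w_k),4r|G'(w_k)|)$, condition~\eqref{a2} turns the radius into $r_k=4r/(\beta|w_k|)\le r\le 1$, and discarding the indices whose disk misses $\H_{\geq L}$ leaves centres in $\H_{\geq L-1}$.

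The gap is the summation, which you leave open (``this should yield''), and it comes from an obstacle that does not exist. The hypothesis $\re w\geq L-1$ bounds $\re w$ from \emph{below}. Since $L\geq\log R+3\geq 3$ and $0<t-1\leq 1$, you get $(\re w)^{1-t}\le (L-1)^{1-t}\le \frac32 L^{1-t}$. So summing over \emph{all} $k\in\Z$ already gives the order $L^{1-t}$, and you need no information about which $k$ are admitted. The paper writes $\sum_{k\in I_L}$ and drops the restriction in the very first inequality of~\eqref{v5}. Your substitutes do not work as stated:
\begin{itemize}
\item The integrated bound $\re G(w_k)\le c'+\beta^{-1}\log|w_k|$ carries a constant depending on $U$ and a base point, so it cannot produce the explicit $C$ for every $L\geq \log R+3$.
\item The claim that finitely many tracts and periodicity force $|k|\gtrsim L$ is unsupported. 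The number of tracts plays no role in this lemma; it enters only in the proof of Theorem~\ref{thm5}, when the $N$ sets $W_{R,j}$ are combined.
\item It is false that only $O(1)$ of the $w_k$ have modulus below $L$: for $\re w=L-1$ there are about $\sqrt{2L}/\pi$ of them.
\end{itemize}

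To close the gap, note that the family $\{w_k\}$ and the set~\eqref{v1} are unchanged if $w$ is replaced by $w+2\pi i m$. So you may assume $w=u+iv$ with $|v|\le\pi$ and $u\ge L-1\ge 2$. For $k\neq 0$ one has $|v+2\pi k|\geq \pi(2|k|-1)\ge \pi|k|$. Since $s\mapsto (u^2+\pi^2s^2)^{-t/2}$ is decreasing on $[0,\infty)$,
\[
\sum_{k\in\Z}\frac{1}{|w_k|^{t}}\le \frac{1}{u^{t}}+2\int_0^\infty \frac{ds}{(u^2+\pi^2 s^2)^{t/2}}\le \frac{1}{u^{t}}+\frac{2\cdot 2^{t/2}}{\pi (t-1)u^{t-1}}\le \frac{1/2+4/\pi}{(t-1)u^{t-1}} .
\]
Here you use $(1+x^2)^{-t/2}\le 2^{t/2}(1+x)^{-t}$ and $u^{-t}\le u^{1-t}/2\le u^{1-t}/(2(t-1))$. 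Combined with $u^{1-t}\le \frac32 L^{1-t}$, this is below $5/((t-1)L^{t-1})$, and multiplying by $(4r/\beta)^t$ gives~\eqref{v2}.

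Keeping the factor $\pi$ matters for the stated constant when $L$ is small. The paper's cruder bound $|v+2\pi k|\ge |k|$ yields $u^{-t}+4/((t-1)u^{t-1})$. This exceeds $5/((t-1)L^{t-1})$, for instance, when $t=2$, $L=4$, $u=3$. So the last inequality of~\eqref{v5} needs this extra slack when $u$ is close to $L-1$.
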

\begin{proof} 
Without loss of generality we may assume that $|\im w|\leq \pi$.
Let $\phi$ be the branch of the inverse of $F$ that maps $\H_{>\log R}$ to~$U$.
Put $w_k\defeq w+2\pi i k$ and $a_k\defeq \phi(w_k)$. 
Since $\re w>L-1\geq \log R+2$,
the function $\phi$ is univalent in the disk $D(w_k,2)$ and hence in $D(w_k,2r)$.
Taking $\rho=1/2$ in Koebe's distortion theorem~\eqref{la2a}
thus yields together with condition~\eqref{a2} that if $a_k\in\H_{>\log R}$, then
\begin{equation}\label{v4} 
\begin{aligned} 
\phi(D(w_k,r)) 
&\subset D\!\left(\phi(w_k),2|\phi'(w_k)|2r\right)
= D\!\left(a_k,\frac{4r}{|F'(a_k)|}\right)
\\ &
\subset D\!\left(a_k,\frac{4r}{\beta|F(a_k)|}\right)
= D\!\left(a_k,\frac{4r}{\beta|w_k|}\right).
\end{aligned}
\end{equation}
Put $r_k\defeq 4r/(\beta|w_k|)$.
Since $|w_k|\geq \re w_k\geq \log R+2\geq 4/\beta$
we have $r_k\leq 1$.
We can thus cover the set~\eqref{v1} with the disks $D(a_k,r_k)$,
taking only those indices $k$ for which $\re a_k>L-1$.
Let $I_L$ be the set of these indices $k$ and let $w=u+iv$. 
Since we assumed that $u>L-1\geq  \log R+2\geq 2$ and $|v|\leq\pi$ we find that 
\begin{equation}\label{v5} 
\begin{aligned} 
\sum_{k\in I_L}
 \frac{1}{|w_k|^t}
&=
\sum_{k\in I_L}
\frac{1}{(u^2+(v+2\pi k)^2)^{t/2}}
\\ &
\leq 
\frac{1}{u^t}
+2 \sum_{k=1}^\infty \frac{1}{(u^2+k^2)^{t/2}}
\leq \frac{1}{u^t} +2\int_0^\infty \frac{ds}{(u^2+s^2)^{t/2}}
\\ &
= \frac{1}{u^t} +\frac{2}{u^{t-1}}\int_0^\infty \frac{dx}{(1+x^2)^{t/2}}
\leq
\frac{1}{u^t} +\frac{4}{u^{t-1}}\int_0^\infty \frac{dx}{(1+x)^{t}}
\\ &
=
\frac{1}{u^t} + \frac{4}{(t-1) u^{t-1}} 
<
\frac{5}{(t-1) L^{t-1}} .
\end{aligned}
\end{equation}
Together with the definition of $r_k$ this yields that 
\begin{equation} \label{v2a}
\sum_{k\in I_L} r_k^t < \frac{C}{L^{t-1}} r^t ,
\end{equation}
with $C$ as in~\eqref{v3}.
Renaming the $a_k$ and $r_k$ if necessary we may actually assume that $I_L=\Z$.
\end{proof}
\begin{proof}[Proof of Theorem~\ref{thm5}] 
Let $F\colon W_R\to \H_{>\log R}$ be the logarithmic transform as described in the introduction,
with $W_R= \exp^{-1}(V_R)$ and $V_R= f^{-1}(\Delta_R)$.
We consider an injective curve $\sigma$ in
$\C\setminus \overline{V_R}$ which connects $0$ to $\infty$.
Let $\Sigma$ be a connected component
of the preimage of $\sigma$ under the exponential function.
For $k\in\Z$ we put $\Sigma_k\defeq \Sigma+2\pi i k$.
Then $\Sigma_k$ and $\Sigma_{k+1}$ bound  a ``strip-like'' domain which we denote
by $S(k)$. To achieve that the $S(k)$ give a partition of the plane we add
$\Sigma_k$ to $S(k)$. However, this will be irrelevant for our later considerations,
since we are interested in points which stay in $W_R$ under iteration of~$F$,
and $W_R\cap \Sigma_k=\emptyset$ for all $k\in\Z$.

An important ingredient in the paper by Urba\'nski and Zdunik~\cite{Urbanski2003}
is to pass from the function $E_\lambda$ to the function $\pi\circ E_\lambda$,
where $\pi$ is the projection onto the strip $\{z\colon -\pi<\im z\leq\pi\}$.
Analogously we consider the projection
$\pi_0\colon \C\to S(0)$ defined by $\pi_0(z)=z-2\pi i k$ if $z\in S(k)$
and study the function
\begin{equation} \label{p10}
F_0\colon S(0)\cap W_R\to S(0), \quad
F_0(z)=\pi_0(F(z)).
\end{equation}

We already noted in the proof of Lemma~\ref{la9} that for $w\in W_R$ and
$z=e^w$ we have $|f^n(z)|\geq M$ if and only if $\re F^n(w)\geq L\defeq \log M$.
And this lemma yields that if this is satisfied, then $z\in J(f)$. 
With
\begin{equation} \label{v7}
I^*(F)\defeq 
\left\{w\in W_R\colon  \lim_{n\to\infty}\re F^n(w)=\infty\right\} 
\end{equation}
and
\begin{equation} \label{v8}
J^*_{\geq L}(F)\defeq 
\left\{w\in W_R\colon \re F^n(w)\geq L\ \text{for all}\ n\geq 0 \right\} 
\end{equation}
it thus suffices to prove that 
$\dim\!\left(J^*_{\geq L}(F) \setminus I^*(F)\right)\leq t$.

Let $V_{R,1},\dots,V_{R,N}$ be the connected components of~$V_R$ and,
for $1\leq j\leq N$, let $W_{R,j}$ be the connected component of
$\exp^{-1}(V_{R,j})$ contained in $S(0)$. Then
\begin{equation} \label{v9}
W_R=\bigcup_{j=1}^N\bigcup_{k\in\Z}
(W_{R,j}+2\pi i k) .
\end{equation}
Let $W^0\defeq\bigcup_{j=1}^N W_{R,j}=W_R\cap S(0)$.

By the periodicity of $F$ we have $F_0^n=\pi_0\circ F^n|_{W^0}$ for all $n\geq 0$. 
With $J^*_{\geq L}(F_0)$ and $I^*(F_0)$ defined analogously 
to~\eqref{v7} and~\eqref{v8} we thus find that
$\dim(J^*_{\geq L}(F) \setminus I^*(F))=\dim(J^*_{\geq L}(F_0) \setminus I^*(F_0))$.

The condition~\eqref{y3} for $M$ implies that $L^{t-1}=(\log M)^{t-1}\geq 2NC$, where $C$ is the constant defined in~\eqref{v3}.
Let now $w\in W^0$ with $\re w\geq L-1$ and let $0<r\leq 1$ with $D(w,r)\subset W^0$.
Then
\begin{equation} \label{v10}
\begin{aligned}
F_0^{-1}(D(w,r))\cap \H_{\geq L} 
&=\bigcup_{k\in\Z} F^{-1}(D(w+2\pi i k,r)) \cap W^0\cap \H_{\geq L}
\\ &
=\bigcup_{j=1}^N \bigcup_{k\in\Z} F^{-1}(D(w+2\pi i k,r)) \cap W_{R,j}\cap \H_{\geq L} .
\end{aligned}
\end{equation}
Lemma~\ref{la10} says that for each $j$ the set
\begin{equation} \label{v10a}
\bigcup_{k\in\Z} F^{-1}(D(w+2\pi i k,r)) \cap W_{R,j}\cap\H_{\geq L}
\end{equation}
can be covered by disks $D(a_{j,k},r_{j,k})$ with $a_{j,k}\in\H_{\geq L-1}$
for all $j$ and $k$ such that
\begin{equation} \label{v10b}
\sum_{k\in\Z} r_{j,k}^t\leq \frac{C}{L^{t-1}}r^t\leq \frac{1}{2N} r^t.
\end{equation}
Thus 
\begin{equation} \label{v11}
F_0^{-1}(D(w,r))\cap \H_{\geq L}
\subset 
\bigcup_{j=1}^N \bigcup_{k\in\Z} D(a_{j,k},r_{j,k})
\end{equation}
and 
\begin{equation} \label{v12}
\sum_{j=1}^N \sum_{k\in\Z} r_{j,k}^t \leq \frac12 r^t.
\end{equation}
In particular, this implies that $r_{j,k}\leq r\leq 1$ for all $j$ and~$k$.
We may thus apply Lemma~\ref{la10} with $D(w,r)$ replaced by $D(a_{j,k},r_{j,k})$.
We obtain a covering of 
$F_0^{-2}(D(w,r))\cap J^*_{\geq L}(F_0)$ with disks such that the sum
of the radii to the power $t$ is at most $r^t/4$.
Inductively we obtain, for each $n\geq 1$, a covering of
$F_0^{-n}(D(w,r))\cap J^*_{\geq L}(F_0)$ with disks $D(c_k,\rho_k)$ 
such that $\sum_{k\in\Z} \rho_k^t\leq r^t/2^n$.

We conclude that the set of all $w\in J^*_{\geq L}(F_0)$ whose orbit with respect
to $F_0$ intersects $D(w,r)$ infinitely often has Hausdorff dimension at most~$t$.
Since every compact subset $K$ of $W^0\cap\H_{\geq L}$ can be 
covered by finitely many such disks, we find that 
the set of all $w\in J^*_{\geq L}(F_0)$ whose orbit intersects $K$ 
infinitely often has Hausdorff dimension at most~$t$. We conclude that
$\dim(J^*_{\geq L}(F_0) \setminus I^*(F_0))\leq t$.
\end{proof}
\begin{proof}[Proof of Theorem~\ref{thm6}] 
We show first that if $M\geq e^{5\pi}R$ is large enough, then
the condition~\eqref{y3} is satisfied for
\begin{equation} \label{y7}
t= 1+ \frac{\log\log\log M }{\log\log M}.
\end{equation}
In order to do so we may assume that the constant $\beta$ in~\eqref{a2} satisfies $\beta\leq 1$.
With $t$ given by~\eqref{y7} and $K\defeq 160 N/\beta^2$ we find that 
if $\log\log\log M\geq K$, then
\begin{equation} \label{y8}
\begin{aligned} 
\log\!\left(\!\left(\frac{10\cdot 4^t\cdot N}{(t-1)\cdot\beta^t}\right)^{1/(t-1)}\right)
&\leq \log\!\left(\!\left(\frac{K}{t-1}\right)^{1/(t-1)}\right)
= \frac{1}{t-1} \log \frac{K}{t-1}
\\ &
= \frac{\log\log M}{\log\log\log M} \log \frac{K\log\log M}{\log\log\log M}
\leq \log\log M
\end{aligned} 
\end{equation}
and thus 
\begin{equation} \label{y8a}
\exp\!\left(\!\left(\frac{10\cdot 4^t\cdot N}{(t-1)\cdot\beta^t}\right)^{1/(t-1)}\right)
\leq  M .
\end{equation}
Thus~\eqref{y3} holds for $t$ given by~\eqref{y7} if $M$ is sufficiently large.

We will apply this with $M\defeq 1/|\lambda|$.
If $|\lambda|$ is sufficiently small and thus $M$ is sufficiently large,
we have
$f_\lambda(D(0,M))=f(D(0,|\lambda| M))=f(D(0,1))\subset D(0,M)$.
For such $\lambda$ we find that $D(0,M)\subset F(f_\lambda)$. 
This implies that $J(f_\lambda)=J_{\geq M}(f_\lambda)$.
Theorem~\ref{thm5} thus yields that 
\begin{equation} \label{y9}
\dim\!\left( J(f_\lambda) \setminus I(f_\lambda) \right)
=\dim\!\left( J_{\geq M}(f_\lambda) \setminus I(f_\lambda) \right)
\leq 1+t
\end{equation}
for $t$ given by~\eqref{y7}. Hence~\eqref{y5} follows.

To prove~\eqref{a3} we use a result of Bara\'nski, Karpi\'nska and Zdunik~\cite{Baranski2009}.
They have shown that the hyperbolic dimension of a function in $\B$ is strictly greater than~$1$.
In particular, this implies that
$\dim\!\left( J(f_\lambda) \setminus I(f_\lambda) \right)>1$ for all~$\lambda\neq 0$.
Together with~\eqref{y5} this yields~\eqref{a3}.
\end{proof}
\begin{proof}[Proof of Corollary~\ref{thm4}] 
Let $f_\lambda(z)=f(\lambda z)$ be as in Theorem~\ref{thm6}.
If $|\lambda|$ is sufficiently small, then $f_\lambda$ is of disjoint type.
By hypothesis, $f$ is also of disjoint type.
It now follows from a result of Rempe (\cite{Rempe2009}, 
see also \cite[Proposition~8.16]{Bergweiler2025a})
that there exists a quasiconformal homeomorphism $\phi\colon\C\to\C$ which is
a conjugacy between $f_\lambda$ and $f$ on their Julia sets.
Thus $J(f) \setminus I(f)=\phi( J(f_\lambda) \setminus I(f_\lambda))$.

By Theorem~\ref{thm6}
 we have $\dim\!\left( J(f_\lambda) \setminus I(f_\lambda) \right) <2$ if $|\lambda|$
is sufficiently small.
A result of Gehring and V\"ais\"al\"a~\cite[Theorem~12]{Gehring1973} now yields that
$\dim\!\left( J(f) \setminus I(f) \right) <2$.
\end{proof}
\begin{remark}
A famous result of Astala~\cite{Astala1994} 
gives the sharp bound for the distortion of Hausdorﬀ dimension under quasiconformal mappings.
For our purposes the weaker result in~\cite[Theorem~12]{Gehring1973} suffices.
\end{remark}

\section{Some remarks on functions with regular logarithmic transform}\label{sec:regular}
The following proposition was already mentioned in the introduction.
\begin{proposition}\label{prop1}
Let $f\in\B$ and $R>|f(0)|$ with $\overline{\sing(f^{-1})}\subset D(0,R)$.
Let $F\colon W_R\to \H_{>\log R}$ be the logarithmic transform as in~\eqref{a0}.
Let $\beta>0$ and suppose that $|F'(z)|\geq \beta |F(z)|$ for all $z\in \partial W_R$.
Then $|F'(z)|\geq \beta |F(z)|$ for all $z\in W_R$.
\end{proposition}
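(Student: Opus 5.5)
Fix a connected component $U$ of $W_R$ and write $F'(z)/F(z)$ for $z\in U$. We want a maximum principle for $h(z)\defeq F(z)/F'(z)$, which is holomorphic on $U$: $F'$ has no zeros there since $F$ is univalent on $U$. The conclusion is equivalent to $|h(z)|\le 1/\beta$ on $U$. On $U$, $F(z)$ has real part $>\log R>0$ and thus is nonzero. Hence $h$ is holomorphic and zero-free on $U$. Since $F$ extends continuously to $\partial U$ (where $\re F=\log R$), the hypothesis says that $|h|\le 1/\beta$ on $\partial U$.

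The plan is to express $h$ through the inverse $\phi=(F|_U)^{-1}\colon \H_{>\log R}\to U$. Then $h(\phi(w))=w\,\phi'(w)$. Set $g(w)\defeq w\phi'(w)$, holomorphic on $H\defeq\H_{>\log R}$. The boundary hypothesis means $\limsup|g(w)|\le 1/\beta$ as $w$ tends to a finite boundary point of $H$. Here one must take some care: $\phi'$ should extend continuously to the line $\re w=\log R$. This holds because $\partial U$ consists of analytic curves ($f$ is locally univalent near $\partial V_R$, which is a preimage of a circle containing no singular values), so $F$ extends conformally across $\partial U$ by reflection.

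The remaining issue, and the main obstacle, is the behaviour at $\infty$, where the Phragmén--Lindelöf principle is needed. The crude estimate comes from applying Koebe's distortion theorem (Lemma~\ref{la-koebe}) to $\phi$ on the disk $D(w,\re w-\log R)$. Together with the fact that $U$ contains no disk of radius greater than $\pi$, it gives $|\phi'(w)|\le 4\pi/(\re w-\log R)$; this is just~\eqref{a1}. Hence $|g(w)|\le 4\pi|w|/(\re w-\log R)$. This bound is fine in sectors $|\arg (w-\log R)|\le \pi/2-\delta$ but blows up near the boundary line far out. To handle this I would use Phragmén--Lindelöf on the half-plane, since $g$ has at most polynomial growth. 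Indeed, applying Koebe on smaller disks near the line, the reflection across $\partial U$, or simply the bound $|g(w)|\le C|w|^2/(\re w-\log R)$, one gets growth of order $|w|^{O(1)}$ uniformly, except very close to the boundary. To avoid boundary issues, the cleanest route is to apply the Phragmén--Lindelöf principle to $g$ on the half-planes $\H_{>\log R+\varepsilon}$. There $g$ is bounded by $O(|w|/\varepsilon)$, which is of order less than one, hence admissible. Its boundary values on $\re w=\log R+\varepsilon$ tend to those on $\re w=\log R$ as $\varepsilon\to0$, locally uniformly by the continuous extension.

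This double limit is the delicate point. To organize it, I would instead apply the Phragmén--Lindelöf principle directly on $H$ to the subharmonic function $\log|g|$. Its boundary values are bounded by $\log(1/\beta)$ at finite points. Its growth is at most $\log|w|+\log(1/(\re w-\log R))+O(1)$. The latter term is problematic only near the boundary, and there continuity of $g$ up to the boundary line controls it on compact pieces. For large $|w|$ near the line, Koebe on disks of radius comparable to $\re w -\log R$ after reflection gives $|g(w)|=O(|w|)$, so $\log|g|=O(\log|w|)=o(|w|)$. Phragmén--Lindelöf then yields $|g|\le 1/\beta$ on $H$, which is the claim.
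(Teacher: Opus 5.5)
Your argument is correct in substance and takes a genuinely different route from the paper.

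\textbf{Comparison with the paper.} The paper argues by contradiction. On the component $T$ it forms the subharmonic function $u=\max\{\log|F/F'|,\log(1/\beta)\}$, extended by $\log(1/\beta)$ to $\C$. It uses Wiman's theorem to see that $u$ grows faster than $\log r$. The Lewis--Rossi--Weitsman theorem then gives a path $\gamma\to\infty$ with $\int_\gamma|F'/F|\,|dz|<\infty$. So $F$ has a finite limit along $\gamma$, and $f$ acquires an asymptotic value of modulus at least $R$, which is impossible.

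You instead pass to the half-plane via $\phi=(F|_U)^{-1}$, where $F/F'$ becomes $g(w)=w\phi'(w)$. You use the univalence of $\phi$ quantitatively (Koebe, plus the fact that $U$ contains no disk of radius $>\pi$) to get growth $O(|w|)$, and conclude with the classical Phragm\'en--Lindel\"of principle. Your route is shorter and uses only classical tools. The paper's route needs no a priori upper bound for $F/F'$, only the impossibility of a finite limit of $F$ along a path to $\infty$ in $T$.

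\textbf{The growth bound near the boundary line.} This is the one step you should justify differently. Koebe on disks of radius comparable to $\re w-\log R$ only gives $|\phi'(w)|\le 4\pi/(\re w-\log R)$, which blows up there. Local Schwarz reflection across the analytic arcs of $\partial U$ gives extensions to neighbourhoods whose size need not be uniform along the unbounded curve $\partial U$. Likewise, your $\H_{>\log R+\varepsilon}$ variant would need the boundary values to converge uniformly on the whole line, not just locally uniformly.

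The clean fix is to choose $R'<R$ with $\overline{\sing(f^{-1})}\cup\{f(0)\}\subset D(0,R')$. This is possible because that set is a compact subset of $D(0,R)$.
\begin{itemize}
\item The logarithmic transform for $R'$ can be chosen to extend $F$. Hence $\phi$ extends to a univalent map of $\H_{>\log R'}$ onto a component of $W_{R'}$, which again contains no disk of radius $>\pi$.
\item Koebe then gives $|\phi'(w)|\le 4\pi/(\re w-\log R')\le 4\pi/(\log R-\log R')$ on $\H_{\geq\log R}$.
\item Thus $g$ is holomorphic on a neighbourhood of $\H_{\geq\log R}$, satisfies $|g|\le 1/\beta$ on the boundary line, and satisfies $|g(w)|=O(|w|)$.
\end{itemize}
The standard half-plane Phragm\'en--Lindel\"of principle then gives $|g|\le 1/\beta$ directly, with no double limit.

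A minor point: $\log R>0$ need not hold, so $F$ may vanish in $U$. This is harmless, since you never use that $h$ is zero-free.
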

The proof uses the following result of Lewis, Rossi and Weitsman~\cite{Lewis1984},
which generalized a result of Huber~\cite{Huber1957}.
\begin{lemma}\label{lemma-huber}
Let $u\colon\C\to[-\infty,\infty)$ be subharmonic and suppose that
\begin{equation}\label{hu1}
\lim_{r\to\infty}\frac{\max_{|z|=r} u(z)}{\log r}=\infty.
\end{equation}
Then there exists a path $\gamma$ tending to $\infty$ such that
\begin{equation}\label{hu4}
\int_{\gamma} e^{-\lambda u(z)}|d z|<\infty
\quad\text{for each } \lambda>0
\end{equation}
and
\begin{equation}\label{hu5}
\frac{u(z)}{\log |z|}\to \infty
\quad\text{as } z\to\infty \text{ along } \gamma.
\end{equation}
\end{lemma}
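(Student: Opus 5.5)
The plan is to build the path $\gamma$ by concatenating pieces that lie in successively smaller superlevel sets of $u$. Then (hu5) holds by construction. For (hu4), each piece must be short relative to how large $u$ is on it.

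\textbf{Step 1: nested unbounded superlevel sets.} For $K\in\mathbb{N}$ consider the open set
\[
\Omega_K\defeq\{z\in\Delta_1\colon u(z)>K\log|z|\}.
\]
The function $v_K(z)\defeq u(z)-K\log|z|$ is subharmonic in $\Delta_1$. By (hu1) it is unbounded above near $\infty$, so $\Omega_K$ is non-empty near $\infty$.

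I claim $\Omega_K$ has an unbounded component. Suppose first that a component $G$ of $\Omega_K$ is bounded with $\overline{G}\subset\Delta_1$. Then $v_K=0$ on $\partial G$ by upper semicontinuity, and the maximum principle forces $v_K\leq 0$ in $G$, which contradicts $G\subset\Omega_K$. To treat components touching the unit circle, I would replace $\Omega_K$ by $\{u>K\log|z|+c_K\}$ with $c_K\geq\max_{|z|\leq 2}u$, and work in $\Delta_2$. Then every bounded component is compactly contained in $\Delta_2$ and is excluded as above. Since $\Omega_K$ contains points of arbitrarily large modulus, some component is unbounded.

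Next I choose the components to be nested. Since $\Omega_{K+1}\subset\Omega_K$ (after adjusting the constants $c_K$ to be increasing), every unbounded component $D_{K+1}$ of $\Omega_{K+1}$ lies in a single component of $\Omega_K$, which is then also unbounded. Hence I can pick unbounded components $D_1\supset D_2\supset\cdots$ inductively.

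\textbf{Step 2: concatenation.} I choose points $p_K\in D_K$ with $|p_K|=:r_K\to\infty$ very rapidly, and polygonal paths $\gamma_K\subset D_K$ joining $p_K$ to $p_{K+1}$. These exist because $D_K$ is open and connected. I then let $\gamma$ be the concatenation of the $\gamma_K$.

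To get (hu5), the path $\gamma_K$ must stay in $\{|z|\geq\rho_K\}$ with $\rho_K\to\infty$. For this I would use that $D_K\cap\{|z|>\rho\}$ still has an unbounded component containing far-out points of $D_{K+1}$, by the maximum principle argument of Step 1 applied in $\Delta_\rho$. Then on $\gamma_K$ we have $u(z)\geq K\log|z|$, so $u(z)/\log|z|\to\infty$ along $\gamma$.

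\textbf{Step 3: integrability (the main obstacle).} On $\gamma_K$ we have
\[
e^{-\lambda u}\leq |z|^{-\lambda K}\leq \rho_K^{-\lambda K},
\]
so
\[
\int_\gamma e^{-\lambda u}|dz|\leq \sum_K \length(\gamma_K)\,\rho_K^{-\lambda K}.
\]
This sum must converge for every $\lambda>0$, including arbitrarily small $\lambda$. That requires $\length(\gamma_K)$ to be at most $\rho_K^{o(K)}$, or, after choosing the pieces suitably, a fixed power of $\rho_K$. The difficulty is circular: the length $\length(\gamma_K)$ depends on the geometry of $D_K$, which is fixed before $\rho_K$ is chosen.

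My first attempt is to choose the data in the order $D_K$, then $\rho_K$, then $\gamma_K$. I would connect $p_K$ to $D_{K+1}$ inside the annulus $\{\rho_K\leq|z|\leq\rho_K^2\}$ wherever possible. I would bound the length of $\gamma_K$ by a length--area argument in the spirit of Huber: estimate the extremal length of the family of curves in $D_K$ joining $|z|=\rho_K$ to $|z|=\rho_K^2$, and pick a curve whose $e^{-\lambda u}|dz|$-length is comparable to the extremal value. If this geometric control of $\length(\gamma_K)$ fails, I would fall back on the argument of Lewis, Rossi and Weitsman~\cite{Lewis1984} (generalizing Huber~\cite{Huber1957}), who carry out this estimate with harmonic-measure bounds, and simply cite that result here.
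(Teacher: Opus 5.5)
The paper does not prove this lemma: it quotes it from Lewis, Rossi and Weitsman~\cite{Lewis1984}. Your fallback of citing that theorem is therefore exactly what the paper does, but then Steps~1--3 play no role. Read as a proof, the proposal has a genuine gap. Steps~1--2 (once repaired, see below) give only \eqref{hu5}, which is the easy part. The estimate \eqref{hu4}, which is the real content of the lemma, is not established, and your Step~3 cannot be completed in the form proposed.

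The trouble is that Step~3 uses only $u\geq K\log|z|$ on $\gamma_K$. Convergence for every $\lambda>0$ would then require polynomial control of Euclidean length, for instance that the part of $\gamma$ in each annulus $\{s\leq|z|\leq s^2\}$ has length at most $s^{A}$ with $A$ independent of $K$. In fact even the inequality as written, with the single bound $\rho_K^{-\lambda K}$ on all of $\gamma_K$, diverges for small $\lambda$ when $u(z)=\log|z|\,\log\log\log|z|$ for large $|z|$. Since $\length(\gamma_K)\geq r_{K+1}-r_K$, you would need $\log r_{K+1}=o(K\log r_K)$. But $\Omega_K$ has no points of modulus below about $\exp\exp e^{K}$, which is incompatible with that.

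No such length bound holds in general. A component of $\{u>K\log|z|+c_K\}$ can be a thin channel that spirals or zigzags many times in every annulus; take $u=\log|f|$ for an entire $f$ with such a tract. Then every path in $D_K$ crossing $\{s\leq|z|\leq s^2\}$ is longer than any prescribed power of $s$. In such a channel $u$ is actually huge (for $u=\log|f|$ it grows exponentially in the distance travelled relative to the width), which is why the true integral converges. Exploiting this needs \emph{lower} bounds for $u$ along a well-chosen path, and subharmonicity gives no pointwise lower bounds: $u$ may be $-\infty$ on a polar set and small on large sets.

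Your length--area remark does not fix this. For the metric $e^{-\lambda u}|dz|$ it bounds the squared length of the best crossing curve by the extremal length of the crossing family times $\iint e^{-2\lambda u}\,dx\,dy$. That bound is useless for a long thin channel unless you show the area integral is correspondingly small, which again is a lower bound for $u$. Moreover, a curve nearly optimal for one $\lambda$ need not be good for another. Producing one path that works for all $\lambda>0$ simultaneously, via harmonic-measure and extremal-length estimates, is precisely the theorem you would be citing.

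There are also smaller problems in Steps~1--2.
At points of $\partial G$ you only have $\limsup v_K\leq 0$, not $v_K=0$; this suffices for the maximum principle.

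More importantly, the inclusion $\Omega_{K+1}\subset\Omega_K$ lets you pass from $D_{K+1}$ up to a component of $\Omega_K$, not down from an already chosen $D_K$. Since there may be infinitely many components at each level, you must show $D_K\cap\Omega_{K+1}\neq\emptyset$. This is true: if $\Omega_K\neq D_K$, then $\max\{v_K,0\}$ on $D_K$, extended by $0$, would be a non-constant subharmonic function of growth $O(\log r)$ vanishing on another unbounded component of $\Omega_K$. That contradicts Wiman's theorem, as used in the proof of Proposition~\ref{prop1}.

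Finally, the maximum principle argument in $\Delta_\rho$ in Step~2 fails as stated, since $v_K$ need not be $\leq 0$ on $|z|=\rho$. It is simpler to build the cut-off into the sets: take $\Omega_K\defeq\{z\in\Delta_{R_K}\colon u(z)>K\log|z|+c_K\}$ with $1<R_K\uparrow\infty$ and $c_K\defeq\max\{0,\max_{|z|\leq R_K}u\}$. Then all components are unbounded, the sets are nested, and every $D_K$ lies in $\Delta_{R_K}$.
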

\begin{proof}[Proof of Proposition~\ref{prop1}]
Let $T$ be a connected component of $W_R$.
Suppose that $|F'(z)|< \beta |F(z)|$ for some $z\in T$.
Define $u\colon\C\to\R$ by $u(z)=\log|F(z)/F'(z)|$ if $z\in T$ and $|F'(z)|< \beta |F(z)|$,
and $u(z)=\log(1/\beta)$ otherwise. Then $u$ is a non-constant subharmonic function.

Since $u$ is bounded on $\partial T$, it follows from Wiman's theorem in its version
for subharmonic functions \cite[Theorem 6.4]{Hayman1989} that
\begin{equation}\label{wi1}
\liminf_{r\to\infty}\frac{\max_{|z|=r} u(z)}{\sqrt{r}}>0.
\end{equation}
Thus~\eqref{hu1} is satisfied.

Let $\gamma$ be the path provided by Lemma~\ref{lemma-huber}. Taking $\lambda=1$ we deduce that
\[
\int_{\gamma} \left| \frac{F'(z)}{F(z)}\right| |d z|<\infty.
\]
Hence the image of $\gamma$ under a branch of $\log F$ has finite Euclidean length.
It follows that this branch tends to some value $a\in\C$ as $z$ tends to $\infty$ along~$\gamma$.
We conclude that $F(z)$ tends to $b\defeq e^a$ as $z$ tends to $\infty$ along $\gamma$.
Thus $f$ tends to $c\defeq e^b$ along $\exp\circ\gamma$.
Note here that $\exp\circ\gamma$ is also a curve tending to~$\infty$,
since if $|z|\to\infty$ in $T$, then also $\re z\to\infty$.
Thus $c$ is an asymptotic value of $f$.

Since $\re F(w)>\log R$ for $w\in W_R$ it follows that $\re b\geq\log R$. Thus $|c|\geq R$.
This is a contradiction, since $c\in\sing(f^{-1})\subset D(0,R)$ by the choice of~$R$.
\end{proof}

Proposition~\ref{prop1} says that~\eqref{a2} is equivalent to the condition
\begin{equation} \label{a2x}
|F'(w)|\geq \beta |F(w)|
\quad \text{for}\ w\in \partial W_R,
\end{equation}
possibly with a different value of $\beta$.
We will give some explanation why~\eqref{a2x} can be considered as a ``regularity condition''.

Let $T$ be a connected component of $W_R$.
Then $z\mapsto\log (F(z)-\log R)$ maps $T$ onto a parallel strip of width $\pi$.
The Ahlfors distortion theorem
(see, e.g., \cite[p.\ 98]{Nevanlinna1953})
 yields that
\begin{equation}\label{hu2}
\log |F(z)|\geq \frac12 \re z-O(1)
\end{equation}
 as $\re z\to\infty$.

Fix a point $z_0\in \partial T$. For $z\in \partial T$ let
$\Gamma_z$ be the part of $\partial T$ that connects $z_0$ with $z$.  Then
\begin{equation}\label{hu3}
\log |F(z)|=\int_{\Gamma_z} \left| \frac{F'(\zeta)}{F(\zeta)}\right| |d \zeta|+O(1)
\end{equation}
 as $\re z\to\infty$.
Combining \eqref{hu2} and \eqref{hu3}
shows that the ``average value'' of $|F'(z)/F(z)|$ over the curve $\Gamma_z$
is at least $\re z/(2\length(\Gamma_z))$.

We conclude that $|F'(z)|\geq \beta |F(z)|$ holds under the following conditions:
\begin{itemize}
\item[(a)]
$\partial T$ is not too wiggly in the sense that $\length(\Gamma_z)=O(\re z)$;
\item[(b)]
$|F'(z)/F(z)|$ does not oscillate too much on $\partial T$.
\end{itemize}
We note that Rottenfu{\ss}er, R\"uckert, Rempe and Schleicher \cite[Definition~5.3]{Rottenfusser2011}
introduced the concept of ``bounded wiggling'' of a tract.
Condition (a) is different from their definition, even though the underlying idea is similar.

\bibliography{literature}{}
\bibliographystyle{plain} 

\noindent
Walter Bergweiler\\
Mathematisches Seminar\\
Christian-Albrechts-Universit\"at zu Kiel\\
Heinrich-Hecht-Platz 6\\
24098 Kiel\\
Germany\\
{\tt Email: bergweiler@math.uni-kiel.de}

\medskip

\noindent
Weiwei Cui and Lingrui Wang\\
Research Center for Mathematics and Interdisciplinary Sciences\\
Shandong University\\
Qingdao, 266237\\
China\\
{\tt Email: weiwei.cui@sdu.edu.cn, lrwang@sdu.edu.cn}
\end{document}